\documentclass[final,1p,number,lefttitle,nopreprintline]{elsarticle}
\usepackage[T1]{fontenc}
\usepackage{lmodern}
\usepackage{amsmath,amssymb,amsthm,mathtools}
\usepackage{microtype}
\usepackage{needspace}
\usepackage{enumitem}
\usepackage{setspace}
\usepackage{fancyhdr}
\usepackage[hidelinks]{hyperref}

\numberwithin{equation}{section}
\biboptions{numbers,square,sort&compress}

\AtBeginEnvironment{thebibliography}{\setstretch{1}}

\newtheorem{theorem}{Theorem}[section]
\newtheorem{lemma}[theorem]{Lemma}
\newtheorem{proposition}[theorem]{Proposition}
\newtheorem{corollary}[theorem]{Corollary}
\theoremstyle{definition}
\newtheorem{remark}[theorem]{Remark}
\makeatletter
\renewenvironment{proof}[1][\proofname]{\par
  \pushQED{\qed}\normalfont
  \topsep6\p@\@plus6\p@\relax\trivlist
  \item[\hskip\labelsep\bfseries #1\@addpunct{.}]\ignorespaces
}{\popQED\endtrivlist\@endpefalse}
\makeatother
\newcommand{\Z}{\mathbb Z}
\newcommand{\Q}{\mathbb Q}
\newcommand{\C}{\mathbb C}

\makeatletter
\patchcmd{\MaketitleBox}{\Large\@title}
  {\fontsize{17}{20}\selectfont\@title}{}{
  \PackageError{advances-format}{Unable to format title}{}}
\patchcmd{\MaketitleBox}{\normalsize\elsauthors}
  {\fontsize{12}{14}\selectfont\elsauthors}{}{
  \PackageError{advances-format}{Unable to format authors}{}}
\patchcmd{\MaketitleBox}{\vskip36pt}{\vskip16pt}{}{
  \PackageError{advances-format}{Unable to format frontmatter spacing}{}}
\patchcmd{\emailauthor}{\ttfamily}{\normalfont}{}{
  \PackageError{advances-format}{Unable to format email addresses}{}}
\makeatother

\journal{Advances in Mathematics}
\hypersetup{
 pdfauthor={Hu Tan and Ying Zhang},
 pdftitle={Fuglede's conjecture for cyclic groups of square-free order},
 pdfsubject={Spectral sets and translational tilings in finite cyclic groups},
 pdfkeywords={Fuglede conjecture, spectral set, cyclic group, square-free order, character matrix}
}
\AtBeginDocument{\hypersetup{hidelinks}}

\begin{document}
\begin{frontmatter}
\title{Fuglede's conjecture for cyclic groups of square-free order}
\author[amss]{Hu Tan}
\ead{tanhu2020@amss.ac.cn}
\author[soochow]{Ying Zhang}
\ead{yzhang@suda.edu.cn}
\affiliation[amss]{
 organization={Academy of Mathematics and Systems Science,
 Chinese Academy of Sciences},
 city={Beijing},postcode={100190},country={China}}
\affiliation[soochow]{
 organization={School of Mathematical Sciences, Soochow University},
 city={Suzhou},postcode={215006},country={China}}
\begin{abstract}
We prove Fuglede's conjecture for finite cyclic groups of square-free order. Specifically, if \(N\) is square-free and \(A\subseteq \mathbb Z_N\) is nonempty with \(|A|=m\), then \(A\) is spectral if and only if it tiles \(\mathbb Z_N\); moreover, these conditions hold precisely when \(m\mid N\) and \(A\) is a complete set of representatives modulo \(m\). This gives a complete structural description of spectral and tiling sets in \(\mathbb Z_N\). The key new ingredient is a prime-coordinate elimination theorem: if \(P\nmid nm\) is an odd prime, then every \(m\)-point spectral pair in \(\mathbb Z_n\times\mathbb Z_P\) projects injectively to a spectral pair in \(\mathbb Z_n\). Its proof combines character-matrix orthogonality, cyclotomic coefficient comparison, and a \(P\)-adic divisibility argument for associated Gram matrices. Iterating this reduction yields the result for arbitrary square-free \(N\), with no restriction on the number or relative sizes of its prime factors.
\end{abstract}
\begin{keyword}
Fuglede's conjecture \sep spectral sets \sep cyclic groups \sep
translational tilings \sep cyclotomic polynomials
\MSC[2020]{Primary 05B45; Secondary 43A40, 20K01}
\end{keyword}
\end{frontmatter}
\setstretch{1.13}

\section{Introduction}

Fuglede's conjecture asks whether translational tiling and the existence
of an orthogonal basis of exponentials are equivalent. In its original
measurable formulation, a bounded measurable set
$\Omega\subset\mathbb R^d$ of positive measure is \emph{spectral} if
$L^2(\Omega)$ has an orthogonal basis
$\{e^{2\pi i\langle\lambda,x\rangle}:\lambda\in\Lambda\}$ for some
$\Lambda\subset\mathbb R^d$. It \emph{tiles by translations} if
$\sum_{t\in T}\mathbf 1_\Omega(x-t)=1$ almost everywhere for some
$T\subset\mathbb R^d$. Fuglede proposed the equivalence in 1974 in his
study of commuting self-adjoint extensions of partial differential
operators~\cite{Fuglede}. He proved the lattice case: a set tiles by a
full-rank lattice $L$ if and only if its dual lattice $L^*$ is a spectrum.

For general measurable sets, Tao disproved the spectral-to-tiling
implication in dimensions at least five~\cite{Tao}; Matolcsi reduced
the dimension to four~\cite{Matolcsi4}, and Kolountzakis and Matolcsi
to three~\cite{KM}. Counterexamples to the reverse implication were
constructed in dimension five by Kolountzakis and Matolcsi~\cite{KMTiles},
in dimension four by Farkas and R\'ev\'esz~\cite{FarkasRevesz}, and in
dimension three by Farkas, Matolcsi, and M\'ora~\cite{FMM}. A recent
preprint of Zhang gives counterexamples to both implications in dimension
two~\cite{ZhangPlane}. The unrestricted one-dimensional conjecture remains
open. For convex sets, by contrast, the equivalence was established for
planar domains by Iosevich, Katz, and Tao~\cite{IKT}, for three-dimensional
polytopes by Greenfeld and Lev~\cite{GL}, and for convex bodies in every
dimension by Lev and Matolcsi~\cite{LMConvex}.

\subsection{The discrete and cyclic conjectures}

The discrete formulation replaces Lebesgue measure by counting measure
and exponentials by characters. A nonempty subset $A$ of a finite
abelian group $G$ is spectral if some set of characters of $G$ restricts to
an orthogonal basis of $\ell^2(A)$; it tiles $G$ if the addition map
$A\times B\to G$ is bijective for some $B\subseteq G$. The discrete
conjecture is also false for general finite abelian groups. Indeed,
finite-group examples underlie the Euclidean counterexamples
in~\cite{Tao,KM,KMTiles}. For vector spaces over prime fields, the
conjecture holds in $(\Z_p)^2$ for every prime $p$~\cite{IMP}, whereas
spectral sets that do not tile exist in $(\Z_p)^4$ for every odd
prime $p$~\cite{FergusonSothanaphan}. The ambient group is therefore
essential to the question.

The cyclic case is formulated as follows. Write $\Z_N=\Z/N\Z$.
For $A\subseteq\Z_N$ with $|A|=m>0$, a set
$\Lambda\subseteq\Z_N$ is a spectrum if $|\Lambda|=m$ and
\begin{equation}\label{eq:spectral-matrix}
 F=\bigl(e^{2\pi i a\lambda/N}\bigr)_{a\in A,\lambda\in\Lambda}
 \quad\text{satisfies}\quad FF^*=F^*F=mI_m.
\end{equation}
We call $(A,\Lambda)$ a \emph{spectral pair}, and write
$A\oplus B=\Z_N$ when each element of $\Z_N$ has a unique representation
as $a+b$, with $a\in A$ and $b\in B$. The \emph{finite cyclic Fuglede
conjecture} asserts that, for every positive integer $N$ and every
nonempty $A\subseteq\Z_N$,
\[
 A\text{ is spectral in }\Z_N
 \quad\Longleftrightarrow\quad
 A\oplus B=\Z_N\text{ for some }B\subseteq\Z_N.
\]
The conjecture remains open in this generality, and no counterexample
in a finite cyclic group is known; see~\cite{Malikiosis,Somlai}.

This question is closely related to the remaining measurable problem
on the real line. Dutkay and Lai~\cite[Theorem~1.3]{DL} established
reductions between bounded subsets of $\mathbb R$, finite subsets of
$\Z$, and finite cyclic groups. In particular, the tiling-to-spectral
implication on $\mathbb R$ is equivalent to that implication for all
finite cyclic groups. In the opposite direction, their reduction from
all cyclic groups to $\mathbb R$ requires every bounded spectral set of measure one to admit a rational
spectrum; a recent preprint of Fu and Song proves
this rationality assertion~\cite{FS}. These connections concern the
entire family of cyclic groups, including orders with repeated prime
factors. Known positive results include prime-power
orders~\cite{Laba}, $p^a q$~\cite{MK}, $p^a q^2$~\cite{KMSV20}, and
$p^a qr$~\cite{ZhangRing}, where $a\ge1$ and the displayed primes are distinct.

\subsection{Square-free orders}

We consider the case
\[
 N=p_1\cdots p_k,\qquad p_1,\ldots,p_k\text{ distinct primes}.
\]
For such orders, the tiling-to-spectral implication was already known
without any restriction on $k$. More precisely, every tile of cardinality
$m$ is a complete set of representatives modulo $m$, and therefore has
the subgroup $m\Z_N$ as a tiling complement. This follows from
Tijdeman's dilation theorem~\cite{Tijdeman} in the finite polynomial
form of Coven and Meyerowitz~\cite[Lemma~3.1]{CM}; the square-free
consequence was discussed by Meyerowitz and \L aba in the comments
to~\cite{TaoNotes}, and a published proof appears
in~\cite[Proposition~4.1]{Shi}. Malikiosis established the
tiling-to-spectral implication for the larger family
$N=p_1^a p_2\cdots p_k$~\cite{Malikiosis}. Thus the outstanding
square-free problem was the spectral-to-tiling implication.

For a bounded number of distinct prime factors, the progression is now
well established. The case of one prime is elementary. The case $pq$
is contained in the 2017 theorem of Malikiosis and Kolountzakis for
$p^a q$~\cite{MK}. Shi proved the case of three distinct primes,
$N=pqr$, in 2019~\cite[Theorem~1.3]{Shi}. Kiss, Malikiosis, Somlai, and Vizer then
proved the case of four distinct primes, $N=pqrs$, in 2022~\cite[Theorem~1.4]{KMSV}.

Somlai's 2026 preprint~\cite[Theorem~1.1]{Somlai} provides an induction
beyond a fixed number of prime factors. It proves that, if $n$ is
square-free and $P>n$ is prime, then the spectral-to-tiling implication
for $\Z_n$ entails the same implication for $\Z_{nP}$. Combined with
the four-prime theorem, this covers the five-prime case
\[
 N=pqrsP\qquad\text{whenever }P>pqrs.
\]
Iterating the same result gives square-free orders with arbitrarily
many prime factors, subject to a growth condition: starting with any
four distinct primes, one may successively adjoin primes satisfying
\[
 p_{j+1}>\prod_{i=1}^{j}p_i\qquad(4\le j<k).
\]
His argument develops the large-prime method of Fallon, Kiss, Mayeli,
and Somlai~\cite{FKMS}, whose result concerns
$N=p^2q^2r$ under $r>p^2q^2$, and uses cyclotomic bounds of
\L aba and Marshall~\cite{LabaMarshall}.

\subsection{Main result}

We prove the conjecture for \emph{every} square-free order. Thus the
number $k$ of distinct prime factors is arbitrary, and no inequality
between their sizes is required. The result also identifies all spectral
sets through the same complete-representative property already known
for tiles.

\begin{theorem}\label{thm:main}
Let $N\ge1$ be square-free, and let $\varnothing\ne A\subseteq\Z_N$
have cardinality $m$. The following assertions are equivalent:
\begin{enumerate}[label=\textup{(\arabic*)},leftmargin=2.3em]
\item $A$ is spectral in $\Z_N$;
\item $A$ tiles $\Z_N$;
\item $m\mid N$ and reduction modulo $m$ maps $A$ bijectively onto $\Z_m$.
\end{enumerate}
When these conditions hold,
\[
 A\oplus m\Z_N=\Z_N,
 \qquad
 \frac Nm\{0,1,\ldots,m-1\}
 \quad\text{is a spectrum of }A.
\]
\end{theorem}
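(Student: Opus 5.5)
We prove the cycle (3)$\Rightarrow$(2), (3)$\Rightarrow$(1), (2)$\Rightarrow$(3), (1)$\Rightarrow$(3); the final assertions come out of the first two.

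\emph{(3) implies (2) and (1).} If $m\mid N$ and $A\to\Z_m$ is a bijection, then every $x\in\Z_N$ is congruent modulo $m$ to exactly one $a\in A$. Since $m\mid N$, the difference $x-a$ is then a unique element of $m\Z_N$, so $A\oplus m\Z_N=\Z_N$. For the spectrum, put $\Lambda=\frac Nm\{0,\dots,m-1\}$. The entry of $F$ at $(a,\frac Nm j)$ is
\[
e^{2\pi i a j/m},
\]
which depends only on $a \bmod m$. Hence $F$ is, up to a row permutation, the Fourier matrix of $\Z_m$, which satisfies $FF^*=F^*F=mI_m$.

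\emph{(2) implies (3).} This is the known square-free tiling statement quoted above. Suppose $A\oplus B=\Z_N$ with $|A|=m$. Tijdeman's theorem in the Coven--Meyerowitz form shows that $tA\oplus B=\Z_N$ for every $t$ coprime to $m$. Since $N$ is square-free, $\gcd(m,N/m)=1$. Following \cite[Proposition~4.1]{Shi}, one then concludes that $A$ meets each residue class modulo $m$ exactly once. One route is through the mask polynomial: write $A(X)B(X)\equiv 1+X+\dots+X^{N-1}$ modulo $X^N-1$ and distribute the cyclotomic factors $\Phi_d$, $d\mid N$, between $A$ and $B$. Here I would simply cite that proposition.

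\emph{(1) implies (3).} This is the substance of the proof, and I would argue by induction on the number of prime factors of $N$. The base case $N=1$ is trivial. For the inductive step write $N=nP$ with $P$ prime. First, $m\mid N$: this follows from the standard fact that, for square-free $N$, a spectral set has $m$ dividing $N$ (via the divisor structure of $\Lambda-\Lambda$, or from the elimination theorem itself). If $P\mid m$ for every prime $P\mid N$, then $m=N$ and $A=\Z_N$, so (3) holds. Otherwise choose a prime $P\mid N$ with $P\nmid m$. Via CRT we have $\Z_N\cong\Z_n\times\Z_P$, and $P\nmid nm$ because $N$ is square-free.

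\begin{itemize}
\item If $P$ is odd, the prime-coordinate elimination theorem stated in the abstract says that the projection $A\to\Z_n$ is injective and its image is spectral in $\Z_n$.
\item If $P=2$, then $m$ is odd. I would handle this either by choosing a different odd prime not dividing $m$ when one exists, or by a separate elementary argument. The latter step deserves care, but if every odd prime of $N$ divides $m$ and $2\nmid m$, then $m=N/2$ and the case can be handled directly.
\end{itemize}

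By induction the image $\bar A\subseteq\Z_n$ satisfies $m\mid n$ and reduces bijectively onto $\Z_m$. Since $m\mid n$, reduction modulo $m$ on $\Z_N$ factors through the projection to $\Z_n$. Therefore $A$ also reduces bijectively onto $\Z_m$, which gives (3).

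The main obstacle is therefore entirely the elimination theorem, and I would prove it first as a separate result. Let $(A,\Lambda)$ be a spectral pair in $\Z_n\times\Z_P$. Write $A=\bigcup_{x}\{x\}\times A_x$ over $x\in\Z_n$, and similarly decompose $\Lambda$. Orthogonality of characters gives
\[
\sum_{x} \zeta_n^{x\cdot u}\,\widehat{1_{A_x}}(v)=0
\]
for the differences $(u,v)\in\Lambda-\Lambda$ with $(u,v)\neq 0$. Coefficient comparison in $\Q(\zeta_n,\zeta_P)$, using that $\zeta_n$ and $\zeta_P$ are linearly disjoint, separates these relations into conditions on the fiber sizes $|A_x|$. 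A $P$-adic valuation argument on the Gram matrix $F^*F=mI_m$ should then force $|A_x|\le1$, using $P\nmid m$ and the fact that $(1-\zeta_P)$ divides differences of $P$-th roots of unity. Showing the projected $\Lambda$ is still a spectrum follows similarly.
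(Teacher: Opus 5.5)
Your outer architecture matches the paper's: (3)$\Rightarrow$(1),(2) directly, (2)$\Rightarrow$(3) by Tijdeman dilation, and (1)$\Rightarrow$(3) by removing odd primes $P\nmid m$ with the elimination theorem and treating the prime $2$ separately. Induction versus iteration is cosmetic. Three local repairs are needed.

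First, the claim that $m\mid N$ is a ``standard fact'' for square-free $N$ is wrong: it is part of what is being proved. You never need it in advance, though. If every prime of $N$ divides $m$, then $N\mid m\le N$. If $2$ is the only prime of $N$ not dividing $m$, then $(N/2)\mid m$ with $m$ odd and $m\le N$, so $m=N/2$. Otherwise the induction itself gives $m\mid n$.

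Second, the case $m=N/2$ needs its one-line argument. If $(x,0),(x,1)\in A\subseteq\Z_m\times\Z_2$, orthogonality of these two rows gives $0=\sum_{(u,j)\in\Lambda}(-1)^j=|\Lambda_0|-|\Lambda_1|$, which forces $m$ to be even.

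Third, (2)$\Rightarrow$(3) closes without citing Shi. Take $t=N/m$, which is coprime to $m$. Dilation makes $tA$ an $m$-element subset of the order-$m$ subgroup $t\Z_N$, so $tA=t\Z_N$, and this says exactly that $A$ is a transversal modulo $m$.

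The genuine gap is in your sketch of the elimination theorem, which you rightly call the heart of the matter. You aim the $P$-adic argument at injectivity ($|A_x|\le 1$), but injectivity is elementary for every prime $P\nmid m$. Two points $(x,i),(x,i')$ with $i\ne i'$ give $\sum_j|\Lambda_j|\zeta_P^{(i-i')j}=0$, so all $|\Lambda_j|$ are equal and $P\mid m$.

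The real content is the step you dismiss as \emph{follows similarly}, namely orthogonality of the projected matrix $W=(\zeta_n^{xu})$. For rows $(x,i),(x',i')$ in different $P$-layers, coefficient comparison shows only that $b_j=\sum_{u\in\Lambda_j}\zeta_n^{(x-x')u}$ is independent of $j$. You must still prove that this common value is $0$, and nothing in your outline does so. Comparing $F$ with $W$ modulo $1-\zeta_P$ yields only the congruence $WW^*\equiv mI_m$ modulo $P$.

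The paper extracts the vanishing from column orthogonality:
\begin{enumerate}[label=\textup{(\roman*)}]
\item The Gram matrices $Q_j=W_jW_j^*$ of the column groups satisfy $Q_j^2=mQ_j$.
\item The observation just made says $Q_j=D_j+B$, with $D_j$ block diagonal and $B$ a common off-diagonal part.
\item Same-layer orthogonality gives $\sum_jD_j=mI_m$.
\end{enumerate}
Summing the quadratic identities gives $P(B^2)_{ii'}=m(P-2)B_{ii'}$ for $i\ne i'$. Since $P\nmid m(P-2)$, an induction puts every entry of $B$ in $P^r\Z[\zeta_n]$ for all $r$, whence $B=0$. This is also exactly where oddness of $P$ is used, which your sketch never accounts for.
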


The new implication is \textup{(1)}$\Rightarrow$\textup{(3)}: spectrality
alone forces $m\mid N$ and makes the quotient map
$\Z_N\to\Z_m$ bijective on $A$. The equivalence of \textup{(2)} and
\textup{(3)}, together with their implication of \textup{(1)}, is the
established tiling result recalled above. Consequently,
Theorem~\ref{thm:main} extends the known three- and four-prime results
and removes the prime-growth restriction from the previously known
families with arbitrarily many distinct prime factors.

\subsection{Proof strategy}

The main step is Theorem~\ref{thm:eliminate}: an $m$-point spectral
pair in $\Z_n\times\Z_P$ projects injectively to a spectral pair
in $\Z_n$ whenever $P$ is an odd prime not dividing $nm$.
Its proof uses two lemmas. Rows and columns of the character matrix
are grouped by their $P$-coordinates. After removal of the
$P$-coordinate factors, row orthogonality in the original matrix and
cyclotomic coefficient comparison (Lemma~\ref{lem:coefficients}) show that the Gram matrices
of the column groups have a common off-diagonal block part.
Column orthogonality supplies the quadratic identities required by
Lemma~\ref{lem:block-rigidity}, which forces this common part to
vanish. This proves orthogonality of the projected character matrix.
The argument imposes no lower bound on $P$.

Proposition~\ref{prop:spectral-transversal} iterates this elimination
over the odd prime factors of $N$ that do not divide $m$.
A cardinality argument, followed by a parity argument for a possible
remaining coordinate of order two, proves that $m\mid N$ and that
reduction modulo $m$ is bijective on $A$. This establishes the new
implication of Theorem~\ref{thm:main}. The proof is completed in
Section~\ref{sec:mainproof} using the classical finite dilation lemma
for tiles and an explicit subgroup spectrum.

\section{Prime-coordinate elimination}\label{sec:elimination}

Let $\zeta_d=e^{2\pi i/d}$, and let $\Phi_d$ denote the $d$th
cyclotomic polynomial. For a prime $P\nmid n$, we use the product
pairing on $\Z_n\times\Z_P$:
\[
 \chi_{(u,j)}(x,i)=\zeta_n^{xu}\zeta_P^{ij}.
\]
The spatial Chinese remainder map is
$a\mapsto(a\bmod n,a\bmod P)$. Under this map, the standard
frequency $\lambda\in\Z_{nP}$ has product coordinates
\[
 (P^{-1}\lambda\bmod n,\ n^{-1}\lambda\bmod P).
\]
The inverses are taken in the indicated coordinates, with the
first coordinate trivial when $n=1$. Thus the product pairing
relabels the dual group, while deleting the spatial $P$
coordinate is the natural quotient onto $\Z_n$.

We first record the coefficient comparison used below. The same
argument appears in the proof of~\cite[Lemma~2.13]{KMSV}.

\begin{lemma}[Cyclotomic coefficient comparison]\label{lem:coefficients}
Let $n\ge1$, let $P$ be a prime not dividing $n$, and let
$c\in\Z_P\setminus\{0\}$. If $b_0,\ldots,b_{P-1}\in\Q(\zeta_n)$
satisfy
\[
 \sum_{j=0}^{P-1}b_j\zeta_P^{cj}=0,
\]
then $b_0=\cdots=b_{P-1}$.
\end{lemma}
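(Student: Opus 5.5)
Since $c\ne0$ in $\Z_P$ and $P$ is prime, multiplication by $c$ permutes $\Z_P$. Hence $\zeta_P^{c}$ is again a primitive $P$th root of unity, and after reindexing $j\mapsto cj$ the relation becomes $\sum_{j}b'_j\zeta_P^{j}=0$, where $(b'_j)$ is a permutation of $(b_j)$. The conclusion is invariant under permuting the $b_j$, so it suffices to treat $c=1$. Writing $\omega=\zeta_P$, the hypothesis says that the polynomial $g(x)=\sum_{j=0}^{P-1}b_jx^j\in K[x]$, with $K=\Q(\zeta_n)$, vanishes at $\omega$.

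The key input is that $\Phi_P(x)=1+x+\cdots+x^{P-1}$ remains irreducible over $K$, so it is the minimal polynomial of $\omega$ over $K$. I would prove this by a degree count. Since $\gcd(n,P)=1$, we have $\Q(\zeta_n)\Q(\zeta_P)=\Q(\zeta_{nP})$, and $\varphi(nP)=\varphi(n)\varphi(P)$. Therefore
\[
[K(\omega):K]=\frac{[\Q(\zeta_{nP}):\Q]}{[K:\Q]}=\frac{\varphi(n)(P-1)}{\varphi(n)}=P-1 .
\]
So the minimal polynomial of $\omega$ over $K$ has degree $P-1$ and divides $\Phi_P$, hence equals $\Phi_P$. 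This uses only the standard facts $[\Q(\zeta_d):\Q]=\varphi(d)$ and $\Q(\zeta_{nP})=\Q(\zeta_n,\zeta_P)$; I regard this as the only substantive step. An alternative is to note that $P$ is totally ramified in $\Q(\zeta_P)$ but unramified in $K$, which gives linear disjointness, but the degree count is cleaner.

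To conclude, observe that $g$ has degree at most $P-1$ and vanishes at $\omega$, so $\Phi_P\mid g$ in $K[x]$. Comparing degrees, $g=b_{P-1}\Phi_P$. Equating coefficients gives $b_j=b_{P-1}$ for all $j$, which is exactly $b_0=\cdots=b_{P-1}$. When $n=1$ we have $K=\Q$ and the argument is just the irreducibility of $\Phi_P$ over $\Q$.
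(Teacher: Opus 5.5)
Your proof is correct and is essentially the paper's argument. The degree count $[\Q(\zeta_n,\zeta_P):\Q(\zeta_n)]=\varphi(nP)/\varphi(n)=P-1$ makes $\Phi_P$ the minimal polynomial over $\Q(\zeta_n)$, so a polynomial of degree at most $P-1$ vanishing at a primitive $P$th root is a scalar multiple of $\Phi_P$. Your reduction to $c=1$ is harmless but unnecessary; the paper applies the same reasoning directly to the primitive root $\zeta_P^{c}$.
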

\begin{proof}
Since $P\nmid n$, the cyclotomic degree formula gives
\[
 [\Q(\zeta_n,\zeta_P):\Q(\zeta_n)]
 =\frac{\varphi(nP)}{\varphi(n)}=P-1.
\]
Thus $\Phi_P(X)=1+X+\cdots+X^{P-1}$ is the minimal polynomial
of the primitive $P$th root $\zeta_P^c$ over $\Q(\zeta_n)$.
The polynomial $\sum_{j=0}^{P-1}b_jX^j$ has degree at most $P-1$
and vanishes at $\zeta_P^c$, so it is a scalar multiple of $\Phi_P$,
possibly the zero polynomial. Its coefficients are therefore equal.
\end{proof}

The next lemma isolates the matrix argument. No commutativity
between the matrices is assumed.

\begin{lemma}[Block-matrix rigidity]\label{lem:block-rigidity}
Let $R=\Z[\zeta_n]$ for some $n\ge1$, let $m\ge1$, and let
$P$ be an odd prime not dividing $m$. Fix a common block decomposition
of the row and column indices of $M_m(R)$. Suppose
$D_0,\ldots,D_{P-1}\in M_m(R)$ are block diagonal and
$B\in M_m(R)$ has zero diagonal blocks. If
\begin{equation}\label{eq:rigidity-assumptions}
 \sum_{j=0}^{P-1}D_j=mI_m,
 \qquad
 (D_j+B)^2=m(D_j+B)\quad(0\le j<P),
\end{equation}
then $B=0$.
\end{lemma}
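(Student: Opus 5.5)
The plan is to split each quadratic identity in \eqref{eq:rigidity-assumptions} into its block-diagonal and off-diagonal parts. Summing the off-diagonal parts over $j$ gives a single identity from which a $P$-adic descent forces $B=0$.

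\textbf{Block decomposition.} For $X\in M_m(R)$, write $X=\Delta(X)+O(X)$, where $\Delta(X)$ keeps the diagonal blocks of $X$ and $O(X)$ keeps the off-diagonal blocks. If $D$ is block diagonal and $B$ has zero diagonal blocks, then $DB$ and $BD$ have zero diagonal blocks. Expanding $(D_j+B)^2=m(D_j+B)$ and taking off-diagonal parts therefore gives
\[
 D_jB+BD_j+O(B^2)=mB\qquad(0\le j<P).
\]
(The block-diagonal parts give $D_j^2+\Delta(B^2)=mD_j$, which I do not expect to need.)

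\textbf{Summing over $j$.} Summing these $P$ identities and using $\sum_j D_j=mI_m$ gives $mB+mB+P\,O(B^2)=PmB$, that is,
\[
 (P-2)m\,B=P\,O(B^2).
\]
Since $P$ is odd and $P\nmid m$, the integer $(P-2)m$ is coprime to $P$. Choose $s,t\in\Z$ with $s(P-2)m+tP=1$. Then
\[
 B=s(P-2)mB+tPB=P\bigl(s\,O(B^2)+tB\bigr)\in P\,M_m(R).
\]

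\textbf{Descent.} Suppose $B\in P^kM_m(R)$ for some $k\ge1$. Then $O(B^2)\in P^{2k}M_m(R)$, so $(P-2)mB\in P^{2k+1}M_m(R)$. Applying the same Bézout trick, now with $s(P-2)m+tP^{2k+1}=1$, gives $B\in P^{2k+1}M_m(R)$. By induction, $B\in\bigcap_{k\ge1}P^kM_m(R)$.

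\textbf{Conclusion.} The ring $R=\Z[\zeta_n]$ is a free $\Z$-module of finite rank, so $\bigcap_k P^kR=0$: an element of $P^kR$ has all coordinates in a $\Z$-basis divisible by $P^k$ for every $k$. Hence $B=0$.

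The only delicate points are two. The first is checking that the products $D_jB$ and $BD_j$ really have vanishing diagonal blocks; this holds because the row and column block decompositions coincide. The second is the role of the hypotheses: oddness of $P$ and $P\nmid m$ are used exactly to make $(P-2)m$ a unit modulo $P$. No commutativity among the matrices, and no condition relating $P$ and $n$, is needed.
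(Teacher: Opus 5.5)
Your proof is correct and follows essentially the same route as the paper. Both arguments sum the quadratic identities and use $\sum_j D_j=mI_m$ to obtain $P\,O(B^2)=(P-2)mB$; they then run a $P$-adic descent, using that $(P-2)m$ is a unit modulo powers of $P$, and conclude from $\bigcap_k P^kR=0$. The differences are only cosmetic: you take off-diagonal parts before summing, and your descent step goes from $P^k$ to $P^{2k+1}$ rather than from $P^r$ to $P^{r+1}$.
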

\begin{proof}
Summing the quadratic identities in~\eqref{eq:rigidity-assumptions}
and using
\[
 \sum_j(D_jB+BD_j)
 =\left(\sum_jD_j\right)B+B\left(\sum_jD_j\right)=2mB
\]
gives
\[
 \sum_jD_j^2+2mB+PB^2=m^2I_m+mPB.
\]
The matrices $D_j^2$ are block diagonal. Hence the off-diagonal
blocks of this identity give
\begin{equation}\label{eq:off-diagonal}
 P(B^2)_{ii'}=m(P-2)B_{ii'}\qquad(i\ne i'),
\end{equation}
where $(B^2)_{ii'}=\sum_k B_{ik}B_{ki'}$.

We prove by induction that every entry of $B$ belongs to $P^rR$
for every integer $r\ge0$. The assertion for $r=0$ is immediate.
If it holds for $r$, then every entry of $PB^2$ belongs to
$P^{2r+1}R\subseteq P^{r+1}R$. By~\eqref{eq:off-diagonal},
the same is true of $m(P-2)B_{ii'}$ for $i\ne i'$.
Since $m(P-2)$ is coprime to $P$, it has an integer inverse
modulo $P^{r+1}$. Thus every entry of $B_{ii'}$ belongs to
$P^{r+1}R$. The diagonal blocks of $B$ are zero, completing
the induction.

Finally, $R$ is free over $\Z$, with basis
$1,\zeta_n,\ldots,\zeta_n^{\varphi(n)-1}$.
Consequently $\bigcap_{r\ge0}P^rR=\{0\}$, and hence $B=0$.
\end{proof}

\begin{theorem}[Prime-coordinate elimination]\label{thm:eliminate}
Let $n\ge1$ and let $P$ be an odd prime with $P\nmid n$. Suppose
$(A,\Lambda)$ is an $m$-point spectral pair in
$\Z_n\times\Z_P$ and $P\nmid m$.
Then the projections of $A$ and $\Lambda$ onto $\Z_n$ are
injective, and their images form a spectral pair in $\Z_n$.
\end{theorem}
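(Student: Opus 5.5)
The plan is to split the character matrix of $(A,\Lambda)$ into its $\Z_n$-part and its $\Z_P$-part, then group the rows by their $P$-coordinates and the columns by theirs. The $\Z_n$-parts of the column groups give Gram matrices. The idea is to show that the sum of these Gram matrices is $mI_m$, which is exactly orthogonality of the projected matrix. Lemma~\ref{lem:coefficients} will make the off-diagonal block parts of the Gram matrices coincide, and Lemma~\ref{lem:block-rigidity} will kill that common part.

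\textbf{Setup.} Write the elements of $A$ as $a=(x_a,i_a)$ and those of $\Lambda$ as $\lambda=(u_\lambda,j_\lambda)$. Let $E=(\zeta_n^{x_au_\lambda})_{a\in A,\lambda\in\Lambda}$, an $m\times m$ matrix over $R=\Z[\zeta_n]$. For $j\in\Z_P$ put $\Lambda_j=\{\lambda:j_\lambda=j\}$, let $E_j$ be the column submatrix of $E$ on $\Lambda_j$ (possibly empty), and set $G_j=E_jE_j^*\in M_m(R)$. Index the rows of $M_m(R)$ by $A$, and block them according to the value of $i_a$. Then
\[
 (G_j)_{aa'}=\sum_{\lambda\in\Lambda_j}\zeta_n^{(x_a-x_{a'})u_\lambda},
 \qquad
 \sum_j G_j=EE^*.
\]

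\textbf{Row orthogonality.} For $a\ne a'$ the row orthogonality of the original matrix reads
\[
 \sum_{j=0}^{P-1}(G_j)_{aa'}\,\zeta_P^{(i_a-i_{a'})j}=0 .
\]
If $i_a\ne i_{a'}$, Lemma~\ref{lem:coefficients} with $c=i_a-i_{a'}$ shows that $(G_j)_{aa'}$ does not depend on $j$. Hence all $G_j$ share a common off-diagonal block part $B$, and we may write $G_j=D_j+B$ with $D_j$ block diagonal and $B$ having zero diagonal blocks. If $i_a=i_{a'}$, including the case $a=a'$, the $\zeta_P$ factor is $1$, so row orthogonality gives $\sum_j(G_j)_{aa'}=m\delta_{aa'}$. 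This says precisely that $\sum_jD_j=mI_m$.

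\textbf{Column orthogonality.} For $\lambda,\lambda'\in\Lambda_j$ the $\zeta_P$ factors cancel, so column orthogonality gives $E_j^*E_j=mI_{|\Lambda_j|}$. Therefore
\[
 G_j^2=E_j(E_j^*E_j)E_j^*=mG_j ,
\]
that is, $(D_j+B)^2=m(D_j+B)$ for every $j$. Since $P$ is odd and $P\nmid m$, Lemma~\ref{lem:block-rigidity} applies and yields $B=0$. Consequently $EE^*=\sum_jG_j=\sum_jD_j=mI_m$. Because $E$ is square, this also gives $E^*E=mI_m$.

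\textbf{Conclusion.} The rows of $E$ are pairwise orthogonal, and two rows with $x_a=x_{a'}$ would coincide and have inner product $m\ne0$. So $a\mapsto x_a$ is injective, and likewise $\lambda\mapsto u_\lambda$ is injective by the columns. Thus both projections onto $\Z_n$ have $m$ elements. Since $E$ is exactly the character matrix of the projected sets in $\Z_n$ and satisfies $EE^*=E^*E=mI_m$, the images form a spectral pair in $\Z_n$.

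The substantive step is recognizing that row orthogonality forces the common off-diagonal part, together with $\sum_jD_j=mI_m$, while column orthogonality within each group $\Lambda_j$ supplies the quadratic identities. Once the Gram matrices $G_j$ are set up this way, both lemmas apply directly. The only care needed is bookkeeping: empty groups $\Lambda_j$ give $G_j=0$, which is harmless, and entries with $a=a'$ belong to the diagonal blocks.
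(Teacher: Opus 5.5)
Your proposal is correct and follows essentially the same route as the paper's proof. The paper likewise forms the Gram matrices $Q_j=W_jW_j^*$ of the column groups $\Lambda_j$. It uses cross-layer row orthogonality with Lemma~\ref{lem:coefficients} to get a common off-diagonal part $B$, same-layer row orthogonality for $\sum_jD_j=mI_m$, and within-group column orthogonality for $Q_j^2=mQ_j$, and then invokes Lemma~\ref{lem:block-rigidity} to conclude $B=0$ and deduce injectivity.
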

\begin{proof}
For $i,j\in\Z_P$, set
\[
 A_i=\{x:(x,i)\in A\},\qquad
 \Lambda_j=\{u:(u,j)\in\Lambda\}.
\]
Let $F$ be the character matrix of $(A,\Lambda)$, and let $W$
be obtained by deleting the $P$-coordinate factor:
\[
 F_{(x,i),(u,j)}=\zeta_n^{xu}\zeta_P^{ij},
 \qquad W_{(x,i),(u,j)}=\zeta_n^{xu}.
\]
Both are $m\times m$ matrices indexed by the original points;
injectivity of the projections is not assumed. We use the row
block decomposition $\C^A=\bigoplus_i\C^{A_i}$.

Let $F_j,W_j$ consist of the columns indexed by $\Lambda_j$, and put
$Q_j=W_jW_j^*\in M_m(\Z[\zeta_n])$. Multiplying row $(x,i)$
of $W_j$ by $\zeta_P^{ij}$ gives $F_j$. This is a unitary row
scaling, so column orthogonality in $F$ gives
\[
 W_j^*W_j=F_j^*F_j=mI_{|\Lambda_j|}.
\]
It follows that
\begin{equation}\label{eq:quadratic}
 Q_j^2=mQ_j\qquad(0\le j<P).
\end{equation}
An empty column block gives $Q_j=0$.

For rows $(x,i),(x',i')$ in distinct layers, define
\[
 b_j=(Q_j)_{(x,i),(x',i')}
     =\sum_{u\in\Lambda_j}\zeta_n^{(x-x')u}.
\]
Their orthogonality in $F$ reads
\[
 \sum_{j=0}^{P-1}b_j\zeta_P^{(i-i')j}=0.
\]
Lemma~\ref{lem:coefficients} implies $b_0=\cdots=b_{P-1}$.
Thus the matrices $Q_j$ have a common off-diagonal block part $B$,
and we can write
\begin{equation}\label{eq:block-parts}
 Q_j=D_j+B,
\end{equation}
where $D_j$ is block diagonal and $B$ has zero diagonal blocks.
Here block indices refer to the layers $A_i$, so each $B_{ii'}$
is an $|A_i|\times|A_{i'}|$ matrix.
For two rows in the same layer, their $P$-coordinate phases cancel
in the inner product. Row orthogonality and the row norms in $F$
therefore yield
\begin{equation}\label{eq:diagonal-sum}
 \sum_{j=0}^{P-1}D_j=mI_m.
\end{equation}

Equations~\eqref{eq:quadratic}--\eqref{eq:diagonal-sum} verify
all the hypotheses of Lemma~\ref{lem:block-rigidity}, so $B=0$.
Consequently
\[
 WW^*=\sum_jQ_j=\sum_jD_j=mI_m.
\]
The square matrix $W$ is nonsingular. Two points of $A$ with the
same projection would give equal rows of $W$, and two points of
$\Lambda$ with the same projection would give equal columns.
Thus both projections are injective, and $W$ is the character
matrix of their images. These images form an $m$-point spectral pair.
\end{proof}

\begin{corollary}\label{cor:abelian-elimination}
Let $H$ be a finite abelian group of exponent $e$, let $P$ be
an odd prime with $P\nmid e$, and suppose $P\nmid m$.
If $(A,\Lambda)$ is an $m$-point spectral pair in $H\times\Z_P$,
with $\Lambda\subseteq\widehat H\times\Z_P$ under the product
pairing, then the projections of $A$ onto $H$ and of $\Lambda$
onto $\widehat H$ are injective and form a spectral pair.
\end{corollary}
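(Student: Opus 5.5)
The plan is to rerun the proof of Theorem~\ref{thm:eliminate} with $\Z_n$ replaced by $H$ and $\zeta_n^{xu}$ replaced by $u(x)$ for $x\in H$, $u\in\widehat H$. The point is that the earlier argument used only two features of $\Z_n$. First, the character values lie in the ring $R=\Z[\zeta_e]$, where $e$ is the exponent. Second, $P\nmid e$, which makes $\Phi_P$ irreducible over $\Q(\zeta_e)$. Both features hold for $H$.

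Concretely, for $i,j\in\Z_P$ define the layers
\[
A_i=\{x\in H:(x,i)\in A\},\qquad \Lambda_j=\{u\in\widehat H:(u,j)\in\Lambda\}.
\]
Under the product pairing the character matrix is
\[
F_{(x,i),(u,j)}=u(x)\,\zeta_P^{ij}.
\]
Delete the $P$-factor to get $W_{(x,i),(u,j)}=u(x)$. Since every $u(x)$ is an $e$th root of unity, $W$ has entries in $R$. Let $W_j$ be the columns indexed by $\Lambda_j$, and set $Q_j=W_jW_j^*\in M_m(R)$.

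Column orthogonality of $F$ is unaffected by the unitary row scaling by $\zeta_P^{ij}$. It therefore gives $W_j^*W_j=mI$, and hence $Q_j^2=mQ_j$. For two rows in distinct layers $i\ne i'$, row orthogonality gives
\[
\sum_j b_j\zeta_P^{(i-i')j}=0,\qquad b_j=\sum_{u\in\Lambda_j}u(x-x')\in\Q(\zeta_e).
\]
Lemma~\ref{lem:coefficients} applies with $n=e$, since $P\nmid e$; it yields $b_0=\cdots=b_{P-1}$. So we can write $Q_j=D_j+B$, with $D_j$ block diagonal and $B$ having zero diagonal blocks. For rows in the same layer the phases cancel, so $\sum_jD_j=mI_m$.

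Lemma~\ref{lem:block-rigidity} with $n=e$, $P$ odd and $P\nmid m$ then gives $B=0$. Hence $WW^*=mI_m$, so $W$ is nonsingular. Two points of $A$ with the same projection would give equal rows of $W$, and two points of $\Lambda$ with the same projection would give equal columns. Both projections are therefore injective, and $W$ is the character matrix of the images. The images thus form an $m$-point spectral pair in $H$.

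The only real check is that Lemma~\ref{lem:coefficients} needs coefficients in a cyclotomic field whose conductor is coprime to $P$. This holds because all character values of $H$ lie in $\Q(\zeta_e)$. The rest of the argument transfers verbatim from Theorem~\ref{thm:eliminate}.
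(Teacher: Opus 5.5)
Your proposal is correct and follows the paper's proof: it replaces $\zeta_n^{xu}$ by the character values $u(x)\in\Z[\zeta_e]$ and applies Lemmas~\ref{lem:coefficients} and~\ref{lem:block-rigidity} with $n=e$ to obtain $WW^*=mI_m$. Injectivity of both projections and spectrality of the images then follow from the nonsingularity of $W$. You simply write out in full the steps that the paper abbreviates.
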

\begin{proof}
Every character of $H$ takes values in $\Z[\zeta_e]$.
Replace $\zeta_n^{xu}$ in the proof of Theorem~\ref{thm:eliminate}
by $\chi(x)$, with $x\in H$ and $\chi\in\widehat H$.
Lemmas~\ref{lem:coefficients} and~\ref{lem:block-rigidity} apply
with $n=e$, and the same argument gives $WW^*=mI_m$.
The conclusions follow from the nonsingularity of $W$.
\end{proof}

\begin{remark}\label{rem:two}
For $P=2$, the coefficient $m(P-2)$ in~\eqref{eq:off-diagonal}
vanishes. The block-matrix lemma therefore requires $P$ to be odd.
In Proposition~\ref{prop:spectral-transversal}, we retain a possible
coordinate of order two and handle it by cardinality and parity.
\end{remark}

\section{Proof of the main theorem}\label{sec:mainproof}

We first prove the spectral implication, which is the new part of
Theorem~\ref{thm:main}.

\begin{proposition}[Spectral sets are transversals]\label{prop:spectral-transversal}
Let $N\ge1$ be square-free. If $A\subseteq\Z_N$ is a nonempty
spectral set of cardinality $m$, then $m\mid N$ and reduction
modulo $m$ maps $A$ bijectively onto $\Z_m$.
\end{proposition}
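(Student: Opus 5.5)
We will prove this by iterating Theorem~\ref{thm:eliminate} over the odd prime factors of $N$ that do not divide $m$. Then a cardinality count and a parity argument at the prime $2$ finish the proof. Write $N=dN'$, where $d=\gcd(N,m)$ and $N'$ is the product of the primes of $N$ not dividing $m$. Since $N$ is square-free, $\gcd(d,N')=1$. Let $N'_{\mathrm{odd}}$ be the odd part of $N'$, and put $n_0=N/N'_{\mathrm{odd}}$, so that $n_0=d$ or $n_0=2d$. By the Chinese remainder identification of Section~\ref{sec:elimination}, a spectral pair $(A,\Lambda)$ in $\Z_N$ becomes a spectral pair in $\Z_{n_0}\times\prod_{P\mid N'_{\mathrm{odd}}}\Z_P$, and the frequencies are relabelled through the product pairing.

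We remove the odd primes $P\mid N'_{\mathrm{odd}}$ one at a time. At each stage the current group is $\Z_n\times\Z_P$ with $P\nmid n$ and $P\nmid m$. Theorem~\ref{thm:eliminate} shows that the projection to $\Z_n$ is injective on both $A$ and $\Lambda$, and that the images again form an $m$-point spectral pair. After all such primes are removed, we obtain a spectral pair of size $m$ in $\Z_{n_0}$. Moreover, the composed projection $A\to\Z_{n_0}$ is injective, and it is exactly reduction modulo $n_0$.

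Injectivity gives $m\le n_0\le 2d\le 2m$, and since $d\mid m$, either $m=d$ or $m=2d$ or $n_0=2d$ with $d<m<2d$. If $n_0=d$, then $m\le d\le m$, so $m=d\mid N$. Reduction modulo $m$ is then an injection between sets of size $m$, hence a bijection onto $\Z_m$. If $n_0=2d$, then $N'$ is even and $m$ is odd, so $d$ is odd. We get a spectral pair of size $m$ in $\Z_{2d}\cong\Z_d\times\Z_2$ with $d\le m\le 2d$ and $m$ odd.

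The main obstacle is ruling out $m\ne d$ in this last case. Since $m\mid d$ fails only if $d<m<2d$, we must show that no odd-size spectral set of size strictly between $d$ and $2d$ exists in $\Z_d\times\Z_2$ with $d\mid m$. This cannot happen: $d\mid m$ and $d<m<2d$ are incompatible. So $m=d$, and the case actually collapses by divisibility alone. Indeed $d=\gcd(N,m)$ divides $m$, so $m\in\{d,2d\}$, and $m=2d$ is impossible because $m$ is odd. Hence $m=d\mid N$.

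It remains to note that reduction modulo $m$ on $A\subseteq\Z_N$ factors through reduction modulo $n_0=2m$. By injectivity, the image of $A$ in $\Z_{2m}\cong\Z_m\times\Z_2$ has $m$ points. We still need this image to hit each residue modulo $m$ exactly once, and this is where the parity argument enters. Orthogonality of the $m$ distinct frequencies gives vanishing sums $\sum_{a}\zeta_{2m}^{a(\lambda-\lambda')}=0$ for $\lambda\ne\lambda'$. Suppose two points $(x,0)$ and $(x,1)$ of the image share the same $\Z_m$-coordinate. Split the rows into the layers $\Z_2$-coordinate $0$ and $1$, as in the proof of Theorem~\ref{thm:eliminate}, and use the fact that a vanishing sum of an odd number $m$ of roots of unity of order dividing $2m$ can be reduced modulo $2$. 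The resulting mod-$2$ relation should force the projected $m\times m$ matrix over $\Z_m$ to have nonzero determinant modulo a prime above $2$. Equivalently, it should give an odd count, namely the number of $a$ with $a\equiv a'\pmod m$, which contradicts a repeated residue. I expect this $2$-adic parity step to be the delicate point. If it resists, I would fall back on the tiling literature: the image is an $m$-point spectral set in $\Z_{2m}$ with $m$ square-free and odd. Applying the whole argument with the roles adjusted (eliminating the odd primes of $m$ is impossible since they divide $m$) is not available, so the parity argument is the genuinely new input to be worked out.
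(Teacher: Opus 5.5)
Your reduction is the paper's. You eliminate the odd primes of $N$ not dividing $m$ by Theorem~\ref{thm:eliminate}, and then use $d\mid m\le n_0\le 2d$ (with $m$ odd when $n_0=2d$) to get $m=d\mid N$. That part is correct, although your first list of cases is muddled before you fix it.

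The gap is the final step, in the case $N$ even and $m$ odd. There you must show that reduction modulo $m$ is injective on the image $A'\subseteq\Z_{2m}$, and you do not. Your suggestions are not an argument: reducing the column sums $\sum_a\zeta_{2m}^{a(\lambda-\lambda')}$ modulo $2$, a determinant modulo a prime above $2$, an ``odd count'' of points $a\equiv a'$. You yourself leave this step open, so the bijection onto $\Z_m$ is unproved in this case.

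The missing idea is a one-line use of \emph{row} orthogonality rather than the column relations you wrote down. Since $F$ is square, $F^*F=mI_m$ is equivalent to $FF^*=mI_m$, so distinct points of $A'$ give orthogonal rows. Suppose $a\ne a'$ in $A'$ with $a\equiv a'\pmod m$. Then $a-a'=m$ is the element of order two in $\Z_{2m}$, and orthogonality of rows $a$ and $a'$ reads
\[
0=\sum_{\lambda\in\Lambda'}e^{2\pi i m\lambda/(2m)}=\sum_{\lambda\in\Lambda'}(-1)^{\lambda}.
\]
This is a sum of $m$ signs, hence congruent to $m$ modulo $2$, so it cannot vanish when $m$ is odd. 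In the product pairing on $\Z_m\times\Z_2$ this is the paper's identity $|\Lambda'_0|-|\Lambda'_1|=0$, which would force $m$ to be even. Thus the $m$ points of $A'$ have distinct residues modulo $m$. Since the composed projection is reduction modulo $2m$, reduction modulo $m$ maps $A$ bijectively onto $\Z_m$.
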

\begin{proof}
Let $(A,\Lambda)$ be an $m$-point spectral
pair in $\Z_N$, with $N$ square-free. Put $g=\gcd(N,m)$.
Thus $g$ is the product of those prime factors of $N$ that divide
$m$. No divisibility relation between $m$ and $N$ is assumed at
this stage.

Apply Theorem~\ref{thm:eliminate} successively to every odd prime
factor of $N$ that does not divide $m$. Each step preserves the
cardinality and spectrality of both projected sets and is injective
on them. More explicitly, after deleting coordinates whose product
is $d$, the spatial and frequency labels in the current cyclic
pairing are $a\bmod(N/d)$ and $d^{-1}\lambda\bmod(N/d)$,
respectively. Thus the spatial map remains the natural quotient
throughout the iteration. The final spectral pair lies in a group of order
\[
 M=\begin{cases}
 2g,&N\text{ even and }m\text{ odd},\\
 g,&\text{otherwise}.
 \end{cases}
\]
Hence $g\mid m\le M$. If $M=g$, this gives $m=g$.
If $M=2g$, then $m/g$ is an odd positive integer at most $2$,
so again $m=g$. In particular, $m\mid N$.

When $M=m$, the final projected set is the whole group $\Z_m$.
Suppose now that $M=2m$, so $N$ is even and $m$ is odd. The final pair, say
$(A',\Lambda')$, lies in $\Z_m\times\Z_2$.
We use the product pairing, relabelling the dual coordinates as
at the beginning of Section~\ref{sec:elimination}. The coordinate
projection onto $\Z_m$ is injective on $A'$. Indeed, a collision
would give two points $(x,0),(x,1)\in A'$, whose row orthogonality
would imply
\[
 0=\sum_{(u,j)\in\Lambda'}(-1)^j
   =|\Lambda'_0|-|\Lambda'_1|.
\]
This makes $m=|\Lambda'|$ even, a contradiction. Since $|A'|=m$,
its projection is the whole of $\Z_m$.

In all cases, $m\mid N$, and the composition of these CRT
projections is the natural quotient map $\Z_N\to\Z_m$ on the
spatial group. It is bijective on $A$. This proves the proposition.
\end{proof}

We next recall a dilation lemma for finite tiles. It is due to
Tijdeman~\cite[Theorem~1]{Tijdeman}; its finite polynomial form
and the Frobenius proof used here appear in
Coven--Meyerowitz~\cite[Lemma~3.1]{CM}. See also
Shi~\cite[Lemma~4.2]{Shi} for the finite-group formulation and
\cite[Proposition~4.1]{Shi} for the resulting square-free
transversal characterization.

\begin{lemma}[Finite dilation]\label{lem:dilation}
Suppose $C\oplus D=\Z_N$ and $|C|=k$. If $r$ is prime and
$r\nmid k$, then the map $c\mapsto rc$ is injective on $C$ and
$rC\oplus D=\Z_N$.
\end{lemma}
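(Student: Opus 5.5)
We prove Lemma~\ref{lem:dilation} by the Frobenius argument in polynomial form. Encode sets by mask polynomials $C(X)=\sum_{c\in C}X^{c}$, with representatives in $\{0,\ldots,N-1\}$, and similarly $D(X)$. The tiling $C\oplus D=\Z_N$ is equivalent to
\[
 C(X)D(X)\equiv 1+X+\cdots+X^{N-1}\pmod{X^N-1}.
\]
In $\Z_N$ this means: the group-ring element $\mathbf 1_C*\mathbf 1_D$ equals $\mathbf 1_{\Z_N}$.

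The key step is to show that $\mathbf 1_{rC}*\mathbf 1_D$ agrees with $\mathbf 1_{\Z_N}$ modulo $r$, where $\mathbf 1_{rC}$ is taken with multiplicity, i.e.\ $C(X^r)$. We work in $\mathbb F_r[X]/(X^N-1)$. Frobenius gives $C(X)^r\equiv C(X^r)$, so
\[
 C(X^r)D(X)\equiv C(X)^{r-1}\,C(X)D(X)\equiv C(X)^{r-1}\,S(X),
\]
where $S=1+\cdots+X^{N-1}$. Since $XS\equiv S$ modulo $X^N-1$, any polynomial $Q$ satisfies $Q(X)S(X)\equiv Q(1)S(X)$. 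Hence the right side is $k^{r-1}S(X)$. Because $r\nmid k$, Fermat's little theorem gives $k^{r-1}\equiv1\pmod r$. Thus every coefficient of $C(X^r)D(X)$ modulo $X^N-1$ is $\equiv1\pmod r$. In particular, every coefficient is a positive integer.

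Next we use counting. The coefficients of $C(X^r)D(X)$ reduced mod $X^N-1$ are nonnegative integers summing to $|C|\,|D|=k\cdot N/k=N$, and there are $N$ of them. Each is at least $1$, so each equals exactly $1$. Therefore every element of $\Z_N$ has exactly one representation $rc+d$ counted with multiplicity over $(c,d)\in C\times D$. If $rc=rc'$ for some $c\ne c'$, then taking any $d\in D$ (which is nonempty, as $N\ge1$), the element $rc+d$ would have two representations, a contradiction. So $c\mapsto rc$ is injective on $C$, $C(X^r)$ is the mask polynomial of the set $rC$, and $rC\oplus D=\Z_N$.

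The only delicate point is this positivity step. Congruence mod $r$ alone does not give exact equality, and the sum-of-coefficients count is what converts ``$\equiv1\pmod r$'' into ``$=1$''. We also rely on the case $r\mid N$ being harmless, since no invertibility of $r$ modulo $N$ was used.
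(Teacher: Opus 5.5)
Your proposal is correct and follows essentially the same route as the paper. Both arguments apply Frobenius modulo $r$ in $\Z[X]/(X^N-1)$, use $Q(X)S(X)\equiv Q(1)S(X)$ and Fermat to make every coefficient of $C(X^r)D(X)$ congruent to $1 \pmod r$, and then use the coefficient sum $N$ to force each coefficient to equal $1$, which also gives injectivity of $c\mapsto rc$ on $C$.
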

\begin{proof}
Work in the commutative group ring
$\Z[\Z_N]\simeq\Z[X]/(X^N-1)$, so that coefficients are indexed
by residues modulo $N$. Write
$C(X)=\sum_{c\in C}X^c$, and similarly for $D$, and put
$U=\sum_{x\in\Z_N}X^x$. The tiling identity is $C(X)D(X)=U$.
For every group-ring element $H$ one has $HU=H(1)U$.
Reducing coefficients modulo $r$ and applying Frobenius gives
\[
 C(X^r)D(X)
 \equiv C(X)^rD(X)
 =C(X)^{r-1}U
 =k^{r-1}U
 \equiv U\pmod r.
\]
The last congruence uses Fermat's theorem and $r\nmid k$.
The coefficients of $C(X^r)D(X)$ in the integral group ring are
nonnegative integers, each congruent to $1$ modulo $r$, and hence
each at least $1$. They sum to $k|D|=N$,
and there are $N$ coefficients. Thus every coefficient is $1$.
This also excludes repeated points in the list $rC$, since any
such repetition would give a coefficient at least $2$ after
addition of an element of the nonempty set $D$.
\end{proof}

\begin{proof}[Proof of Theorem~\ref{thm:main}]
Proposition~\ref{prop:spectral-transversal} proves
\textup{(1)}$\Rightarrow$\textup{(3)}.

To prove \textup{(2)}$\Rightarrow$\textup{(3)}, suppose that
$A\oplus B=\Z_N$ and $|A|=m$. Counting gives
$m\mid N$. Put $b=N/m$. Since $N$ is square-free,
$\gcd(b,m)=1$. Apply Lemma~\ref{lem:dilation} successively to
the prime factors of $b$. The dilation $a\mapsto ba$ is injective
on $A$, and $bA$ has $m$ elements. But it is contained in the
subgroup $b\Z_N$, also of order $m$, so
\[
 bA=b\Z_N.
\]
The equality $ba=ba'$ in $\Z_N$ is equivalent to $a=a'$ modulo
$m$. Thus reduction modulo $m$ maps $A$ bijectively onto $\Z_m$.

Finally, suppose \textup{(3)} holds. The unique representative
of each residue modulo $m$ immediately gives
$A\oplus m\Z_N=\Z_N$. To see spectrality directly, set
\[
 \Lambda_0=\left\{\frac Nm j:0\le j<m\right\}\subseteq\Z_N.
\]
For distinct $j,k\in\{0,\ldots,m-1\}$,
\[
 \sum_{a\in A}e^{2\pi i a(j-k)/m}
   =\sum_{u=0}^{m-1}e^{2\pi i u(j-k)/m}=0.
\]
The $m$ columns indexed by $\Lambda_0$ have squared norm $m$,
so $\Lambda_0$ is a spectrum. This completes the proof of
Theorem~\ref{thm:main}.
\end{proof}

\end{document}